\documentclass[12pt]{amsart}
\usepackage{amssymb,latexsym}

\begin{document}

\newcommand{\lab}[1]{\label{#1}\marginpar{\footnotesize #1}}

\newcommand{\bi}{b}
\newcommand{\di}{d}

\newtheorem{thm}{Theorem}
\newtheorem{pro}[thm]{Proposition}
\newtheorem{cor}[thm]{Corollary}
\newtheorem{lem}[thm]{Lemma}
\newtheorem{prob}[thm]{Problem}
\newtheorem{obs}[thm]{Observation}
\newtheorem{fact}[thm]{Fact}
\newtheorem{add}[thm]{Addendum}
\newtheorem{ex}[thm]{Example}
\newcommand{\ov}[1]{\overline{#1}}

\newcommand{\mb}[1]{\mathbb{#1}}
\newcommand{\mc}[1]{\mathcal{#1}}
\newcommand{\co}[1]{}

\newcommand{\im}{{\sf im}}
\newcommand{\vep}{\varepsilon}

\title[Lattices of equivalenceses]{Some remarks on  lattices of equivalences }

\author[C. Herrmann]{Christian Herrmann}
\address{Technische Universit\"{a}t Darmstadt FB4\\Schlo{\ss}gartenstr. 7, 64289 Darmstadt, Germany}
\email{herrmann@mathematik.tu-darmstadt.de}

\maketitle

\section{ Closure properties of classes of representale lattices}

Relying on  Mal'cev's method of axiomatic correspondences (cf. \cite{hp}), 
we consider classes $\mc{A}$ of multi-sorted  structures  $A$
with two   designated  sorts $S$ and $L$
where   $L$ is endowed with a lattice structure 
and where $\rho \subseteq S\times S \times L$ is a relation, 
the only relation or operation
relating sort $L$ with other sorts.
Moreover, the  following are required
\begin{enumerate}
\item  $\vep(a):= \{(x,y) \mid x,y \in S,  \rho(x,y,a)\}$ is
an equivalence relation on $S$ for each $a \in L$.
\item  $a \mapsto \vep(a)$ is an injective map of $L$ 
into the set $\Pi(S)$  of all equivalence relations on $S$. 
\item $\vep(a \cdot b)= \vep(a) \cap \vep(b)$ for all $a,b \in L$.
\item
 There is $n\in \mb{N}$ (uniformly for all $A\in \mc{A}$) such that,
for all $a,b \in L$, $\vep(a+b)$ 
is the $n$-fold relational product of $\vep(a)$ and $\vep(b)$.
\end{enumerate}

For $A \in \mc{A}$ let $A^-$ denote the structure obtained by
removing $L$ and $\rho$; let $A|L$ denote
sort $L$ of $A$ with  its lattice structure..
 Let $\mc{A}^-$ and $\mc{A}|L$ denote 
the class of all $A^-$ resp. $A|L$ where $A \in \mc{A}$.

\begin{fact}
$\mc{A}|L$ is closed under formation of sublattices.
\end{fact}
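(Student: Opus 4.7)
The plan is to prove the closure by a direct construction: given $A \in \mc{A}$ and a sublattice $L'$ of $A|L$, build an $A' \in \mc{A}$ whose lattice reduct is exactly $L'$. Concretely, let $A'$ agree with $A$ on every sort and every operation/relation other than $L$ and $\rho$ (so $(A')^- = A^-$), take $L'$ as the new $L$-sort, and replace $\rho$ by its restriction $\rho' = \rho \cap (S \times S \times L')$. Denote by $\vep'$ the resulting map from $L'$ into $\Pi(S)$; by construction $\vep'(a) = \vep(a)$ for all $a \in L'$.

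The bulk of the work is then the routine verification that $A'$ satisfies the four axioms (1)--(4) defining membership in $\mc{A}$. Axiom (1) is immediate since $\vep'(a) = \vep(a)$ is an equivalence relation on $S$. Axiom (2) follows because the restriction of an injective map is injective. For axioms (3) and (4) the key observation is that $L'$ being a \emph{sublattice} of $A|L$ means meets and joins computed in $L'$ coincide with those computed in $A|L$; thus for $a,b \in L'$,
\[
\vep'(a \cdot_{L'} b) = \vep(a \cdot b) = \vep(a) \cap \vep(b) = \vep'(a) \cap \vep'(b),
\]
and analogously $\vep'(a +_{L'} b) = \vep(a+b)$ is the $n$-fold relational product of $\vep(a)$ and $\vep(b)$, with the \emph{same} $n$ that works uniformly on $\mc{A}$.

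There is no real obstacle: the proof is essentially a remark that nothing in axioms (1)--(4) depends on the whole of $L$, so the axioms descend to any sublattice, and the same uniform bound $n$ can be reused. The only point deserving explicit mention is the uniformity of $n$ in axiom (4), since this is what guarantees $A' \in \mc{A}$ (rather than merely in some larger class), and the fact that this uniformity is built into the definition of $\mc{A}$ makes the verification trivial.
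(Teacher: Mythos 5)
Your proposal is correct and follows exactly the paper's own argument: restrict $\rho$ to $S\times S\times L'$ and observe that axioms (1)--(4), including the uniform bound $n$, pass to the sublattice since meets and joins in $L'$ agree with those in $L$. The paper merely states this construction in one line; your explicit verification of the four axioms is the routine filling-in of the same idea.
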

\begin{proof}
If $A \in  \mc{A}$ and if $L'$ is  a sublattice of
sort $L$ of $A$,   then $A'\in \mc{A}$ where $L$ is replaced
by $L'$ and  $\rho$ by the restriction to $S\times S\times L'$.
\end{proof}

\begin{fact}
If $\mc{A}^-$ is an axiomatic class  then so is $\mc{A}$. 
 If $\mc{A}^-$ is closed under ultraproducts 
then so are $\mc{A}$ and $\mc{A}|L$.
\end{fact}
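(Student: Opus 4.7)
The plan is to express everything in multi-sorted first-order logic and then invoke \L{}o\'s's theorem.

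First I would verify that (1)--(4) together with the lattice axioms can be written as first-order sentences in the signature containing $S$, $L$, the lattice operations, and $\rho$. Items (1) and (3) are straightforward universal sentences, and injectivity in (2) is expressed by the implication ``$a \neq b \Rightarrow \exists x,y\in S$ with $\rho(x,y,a) \not\leftrightarrow \rho(x,y,b)$''. The crucial point is (4): because the integer $n$ is \emph{uniform} across all of $\mc{A}$, a single sentence
\[
\forall a,b\in L\ \forall x,y\in S\ \bigl[\rho(x,y,a+b) \leftrightarrow \exists z_1,\ldots,z_{n-1}\in S\ \theta_n(x,z_1,\ldots,z_{n-1},y,a,b)\bigr]
\]
captures it, where $\theta_n$ is the conjunction alternating $\rho(\cdot,\cdot,a)$ and $\rho(\cdot,\cdot,b)$ along the chain $x,z_1,\ldots,z_{n-1},y$. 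Were $n$ allowed to vary with $A$, this would escape first-order logic, so uniformity is essential.

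For the first assertion this is enough: if $\Sigma^{-}$ axiomatizes $\mc{A}^{-}$, then $\Sigma^{-}$ together with the lattice axioms and the sentences encoding (1)--(4) axiomatizes $\mc{A}$.

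For the second assertion, I would invoke the fact that multi-sorted ultraproducts are formed sort by sort. Given $A_i\in\mc{A}$ and an ultrafilter $U$ on the index set, the reduct $(\prod_U A_i)^{-}$ coincides with $\prod_U (A_i^{-})$ and so lies in $\mc{A}^{-}$ by hypothesis, while \L{}o\'s's theorem transfers each of the first-order sentences above, giving $\prod_U A_i\in \mc{A}$. Closure of $\mc{A}|L$ follows at once: represent each $L_i\in \mc{A}|L$ as $A_i|L$ with $A_i\in\mc{A}$, form $\prod_U A_i\in\mc{A}$, and read off its $L$-sort, which is $\prod_U L_i$. The only real technical point is the first-order encoding of (4), which the uniformity of $n$ reduces to a routine check.
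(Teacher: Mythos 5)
Your proposal is correct and follows essentially the same route as the paper: axiomatize $\mc{A}$ by adding to the axioms of $\mc{A}^-$ first-order sentences expressing the lattice structure and that $\vep$ is an embedding into a lattice of $n$-permuting equivalences (uniformity of $n$ being what makes condition (4) first-order), then use the sort-wise construction of multi-sorted ultraproducts together with \L{}o\'s's theorem. You merely spell out the details the paper leaves implicit.
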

\begin{proof}
In addition to the axioms of $\mc{A}^-$ 
 one has to require that $L$ is a lattice 
and that $\vep$ defines an embedding into 
a lattice of $n$-permuting equivalences on $S$.
This can be stated via first order formulas
in the language of $\mc{A}$. Finally observe that the ultraproduct
of a multi-sorted structure is constructed sort-wise.

\end{proof}

\begin{fact} If $\mc{A}^-$ is closed under direct products then
so are $\mc{A}$ and $\mc{A}|L$.
\end{fact}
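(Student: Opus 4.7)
The plan is to construct the product in $\mc{A}$ in the only reasonable way: form the $\mc{A}^-$-product of the non-lattice parts, and then mount the product lattice and the coordinatewise relation $\rho$ on top. Concretely, given a family $(A_i)_{i\in I}\subseteq \mc{A}$, I let $B^-:=\prod_i A_i^-$, which lies in $\mc{A}^-$ by hypothesis. I then take sort $L$ of $B$ to be $\prod_i L_{A_i}$ with its coordinatewise lattice structure, and define $\rho_B((x_i),(y_i),(a_i))$ to hold iff $\rho_{A_i}(x_i,y_i,a_i)$ holds for every $i$. The task is then to verify that $B$ so equipped satisfies (1)--(4), and finally to note that the $|L$ statement is a free consequence.

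Axioms (1)--(3) will be handled by routine coordinatewise reasoning: $\vep_B((a_i))$ is the intersection, taken in $\Pi(\prod_i S_{A_i})$, of the pullbacks of the equivalences $\vep_{A_i}(a_i)$ along the projections, so it is an equivalence relation, and the meet $\vep_B((a_i)\cdot(b_i))=\vep_B((a_i))\cap\vep_B((b_i))$ follows from the same identity in each factor. For the injectivity required in (2), if $(a_i)\ne(b_i)$ then $a_{i_0}\ne b_{i_0}$ at some coordinate, and injectivity of $\vep_{A_{i_0}}$ yields a pair $(x_{i_0},y_{i_0})$ witnessing $\vep_{A_{i_0}}(a_{i_0})\ne\vep_{A_{i_0}}(b_{i_0})$; choosing $x_j=y_j$ for $j\ne i_0$, reflexivity of all $\vep_{A_j}$ lets me extend this to a pair in $\prod_i S_{A_i}$ that distinguishes $\vep_B((a_i))$ from $\vep_B((b_i))$.

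The main point, and the only non-routine step, is axiom (4). Here the uniformity of $n$ in the hypothesis is essential. If $((x_i),(y_i))\in \vep_B((a_i)+(b_i))$, then in each coordinate $i$ there is an alternating chain $x_i=z_i^0,z_i^1,\ldots,z_i^n=y_i$ with consecutive pairs lying alternately in $\vep_{A_i}(a_i)$ and $\vep_{A_i}(b_i)$, and because $n$ is the same integer for every coordinate, the sequences $(z_i^k)_i$ for $k=0,\ldots,n$ assemble into a single chain of length $n$ in $\prod_i S_{A_i}$ whose links alternate in $\vep_B((a_i))$ and $\vep_B((b_i))$. The reverse inclusion is immediate, so $\vep_B((a_i)+(b_i))$ is the $n$-fold product of $\vep_B((a_i))$ and $\vep_B((b_i))$. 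Were $n$ allowed to depend on $A_i$, this step would fail, and that is the only real obstacle.

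Finally, $\mc{A}|L$ inherits closure under direct products because the construction above makes the lattice sort of the product structure $B$ literally equal to the product lattice $\prod_i(A_i|L)$, so $\prod_i(A_i|L)=B|L\in\mc{A}|L$.
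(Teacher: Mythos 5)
Your proposal is correct and follows essentially the same route as the paper: the product is formed sort-wise, $\vep$ of a tuple is the coordinatewise product of the $\vep_i$, and the meet and $n$-fold join conditions are verified coordinatewise, with the uniformity of $n$ doing the work in axiom (4). You supply some details the paper leaves implicit (the injectivity argument via reflexivity and the explicit assembly of the alternating chains), but the underlying argument is identical.
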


\begin{proof} First, obeserve that direct products
of multi-sorted structures are constructed sort-wise.
Now, let $A=\prod_{i\in I}A_i$, in particular 
\[ \rho((x_i| i \in I),(y_i| i\in I),(a_i \mid i\in I))
 \mbox{ iff } \rho_i(x_i,y_i,a_i) \mbox{ for all }  i\in I   \]
that is
\[  \vep((a_i|i\in I))= (\vep_i(a_i)\mid i \in I). \]
It follows for $a=(a_i|i \in I)$ and $b=(b_i| i\in I)$ in $L$: 
\[ \vep(a \cdot b)= (\vep_i(a_i\cdot b_i) 
| \in I) =(\vep_i(a_i) \cap \vep_i(b_i)|i \in I)= \vep(a)  \cap \vep(b)\]
\[ \vep(a + b)= (\vep_i(a_i+ b_i) 
| \in I) =(\vep_i(a_i) \bowtie_n \vep_i(b_i)|i \in I)= \vep(a)  \bowtie_n \vep(b)\]
where
$\alpha \bowtie_n  \beta= \alpha\circ \beta \circ \alpha \circ \ldots $       
stands for the $n$-termed relational product of $\alpha$ and $\beta$.

\end{proof}

\begin{cor}
 (Folklore) If $\mc{A}^-$ is closed under ultraproducts and direct 
products then $\mc{A}|L$ is a quasivariety.
\end{cor}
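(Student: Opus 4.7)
The plan is to invoke Mal'cev's classical characterization of quasivarieties: a class of algebras (of a fixed similarity type) is a quasivariety if and only if it is closed under isomorphic images, subalgebras, direct products, and ultraproducts (equivalently, axiomatizable by quasi-identities). So the task reduces to checking that $\mc{A}|L$ enjoys each of these four closure properties.

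I would proceed by simply harvesting the preceding facts. Closure of $\mc{A}|L$ under isomorphism is immediate from the fact that $\mc{A}$ is defined by structural properties preserved by isomorphism. Closure under sublattices is Fact~1. For closure under direct products, I would use Fact~3: given a family $L_i = A_i|L$ with $A_i \in \mc{A}$, the assumption that $\mc{A}^-$ is closed under direct products gives $\prod_i A_i^- \in \mc{A}^-$, and Fact~3 equips the product with the structure making $\prod_i A_i \in \mc{A}$, whence $\prod_i L_i \in \mc{A}|L$. Closure under ultraproducts is handled analogously via Fact~2, which asserts exactly that $\mc{A}|L$ inherits closure under ultraproducts from $\mc{A}^-$.

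Having verified the four closure conditions, I would conclude by citing Mal'cev's theorem to deduce that $\mc{A}|L$ is a quasivariety. One small bookkeeping point worth mentioning is the trivial algebra: the one-element lattice lies in $\mc{A}|L$ because it arises as the image of $L$ in any $A \in \mc{A}$ collapsed to a singleton, or equivalently as an empty-indexed direct product, so no additional argument is needed.

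The main obstacle, such as it is, is purely bibliographic rather than mathematical: one must make sure that the version of Mal'cev's theorem being invoked matches the chosen formulation of ``quasivariety'' (in particular, whether the class is required to contain a trivial algebra and whether closure under $\mathbf{I}$ is built in or stated separately). Since the three Facts already supply $\mathbf{S}$, $\mathbf{P}$, and $\mathbf{P_u}$ closure, once the convention is fixed the corollary follows with no further computation; this explains the attribution ``Folklore.''
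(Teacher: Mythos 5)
Your proposal is correct and matches the paper's intent exactly: the corollary carries no explicit proof precisely because the three preceding Facts were set up to supply closure under sublattices, ultraproducts, and direct products, after which Mal'cev's characterization of quasivarieties finishes the argument. Your remarks on isomorphism closure and the trivial lattice are reasonable bookkeeping that the paper leaves implicit.
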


In particular, each of the following classes of lattices
is a quasivariety. 
\begin{enumerate}
\item The class of all lattices embeddable into  lattices
 of permuting equivalences. 
\item The class of all lattices embeddable into
congruence lattices of algebras in a congruece permutable
quasivariety.
\end{enumerate}
Observe that embedding into lattices of normal subgroups, ideals, resp.
submodules ($1$- or $2$-sorted) subsumes under (2).

\begin{prob}
Is any of the above mentioned classes 
 closed   under homomorphic images?
\end{prob}

\begin{prob}
If $L$ embeds into the subspace lattice of some finite vector space $V_F$,
does every homomorphic image of $L$ embed into some lattice
of permuting equivalences?
\end{prob}
A naive appoasch would be to construct an embedding  
into an ideal of $L(V)$ choosing atomic supplements for 
join irreducibles. This fails in view of an example 
provided by the theory of linear representations of posets
(cf. \cite{contr}).

For the case of $n$-permutable representations the answer
and a finite axiomatization  has been 
given by J\`onsson: $n=3$ all modular lattices, $n>3$ all lattices.

\section{Examples}
Examples of modular (and even Argueesian)  lattices not having a particular kind of represebtation
have been given for the following cases.
\begin{enumerate}
\item No vector space representation (Dilworth and Hall \cite{dil})
\item No representation in normal subgroup lattices (J\`onsson \cite{jon2})
\item No representation in congruence lattices of algebras
from a congruence modular variety (Freese, Herrmann, Huhn \cite{f0} 
\item No representation by permuting equivalences (Haiman \cite{hai1,hai2})
\end{enumerate}
The common feature of (1)-(3) is that one combines subspace  lattices $L_i$, $i=1,2$,
of height $3$ and          characteristic $p_1 \# p_2$  a modular lattice
$L$ so that the prime quotients of the  $L_i$ are projective in $L$
+ we write $p_1 \# p_2$ if $p_1,p_2$ are distinct primes or $p_1=p_2=0$.
A particular construction is as follows. Let $S_1=M_3$
the $5$-element modular lattice.
Now  construct a sequence of  simple finite
planar modular lattices $S_n$ (called \emph{snakes}) of length $n+1$
and width $4$
 such that $S_{n-1}=[0,c]_{S_n}$, This is done 
by gluing a prime  quotient $a/0$ of $M_3$ with $1_{S_n}/c$
where $c$ is the unique coatom of $S_n$
 which is in $S_{n-1}$. Obsere that in $S_n$ one has 
$a/0$ projective to $1/c$ for any atoms $a$ and coatoms $c$ but
not in less than $n$ steps.  Now, with $L_i$ from above,
the $3$-distributive modular lattice
$M_n(p_1,p_2)$ is obtained by gluing quotients  $1/h$ with $a/0$
where $h$ is a coataom of $L_1$ and $a/0$  an atom of $S_n$ 
and $1/c$ with $p/0$ where $c$ is a coatom of $S_n$  and 
$p$ an atom of $L_2$.

 \begin{thm} (Freese, Herrmann, Huhn)
Assume  $p_1\# p_2$. Then no $M_n(p_1,p_2)$ embeds into
the congruence lattice of an algebra from a congruence modular
variety. On the other hand, for each $m$ there is $k$
such that for all $n>k$
and any $m$-generated sublattice of $M_n(p_1,p_2)$ there is an embedding 
into $L(V_1) \times L(V_2)$
where the $V_i$ are finite dimensional vector
spaces over the prime fields of characteristic $p_i$.
\end{thm}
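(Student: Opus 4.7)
My plan is to handle the two halves of the theorem by entirely different methods: commutator theory in congruence modular varieties for the non-embeddability, and a combinatorial thinness argument for the positive half.

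For the first assertion I would argue by contradiction. Suppose $\varphi\colon M_n(p_1,p_2) \hookrightarrow \mathrm{Con}(A)$ with $A$ in a congruence modular variety. The plan is to invoke the modular commutator theory of Freese--McKenzie: to every prime quotient $\alpha/\beta$ in $\mathrm{Con}(A)$ one attaches an abelian group, and projective prime quotients must carry \emph{isomorphic} such groups, so in particular the characteristic of the attached module is a projectivity invariant. Since $L_1 = L(V_1)$ and $L_2 = L(V_2)$ are coordinatized by fields of characteristics $p_1$ and $p_2$ respectively, all their prime quotients inherit the corresponding characteristic. But the snake $S_n$ inside $M_n(p_1,p_2)$ makes each prime quotient of $L_1$ projective (in $n$ steps) to each prime quotient of $L_2$. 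Hence one would need $p_1=p_2$ or $p_1=p_2=0$, contradicting $p_1 \# p_2$.

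For the second assertion, fix $m$ and let $K$ be an $m$-generated sublattice of $M_n(p_1,p_2)$. The plan is in two steps. First, I would use the fact that $S_n$ is planar of width $4$ to show that, up to projectivity, $K \cap S_n$ lies in some sub-snake $S_{k(m)}$ where $k(m)$ depends only on $m$; the intuition is that $m$ lattice elements can "resolve" only boundedly many distinct levels of a width-$4$ snake. Choose any $n > k(m)$. Second, I would construct an explicit embedding $\psi\colon K \hookrightarrow L(V_1) \times L(V_2)$ by splitting the snake at some coatom not separated by $K$: the first coordinate realizes $L_1$ together with the low portion of $S_n$ inside a characteristic-$p_1$ projective geometry $L(V_1)$, while the second coordinate symmetrically realizes $L_2$ with the high portion of $S_n$ inside $L(V_2)$. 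Because the long projectivity through the snake is not witnessed by $K$, the cross-characteristic obstruction that defeats the first assertion is absent.

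The main obstacle will be the combinatorial step: establishing the uniform bound $k(m)$ and verifying that the resulting $\psi$ is injective and preserves all meets and joins of $K$. The negative direction is conceptually standard once the Freese--McKenzie machinery is in hand, but care is needed to match the lattice-theoretic notion of projectivity (finitely many transpose steps through prime quotients) with the projectivity that the commutator module respects.
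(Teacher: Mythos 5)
The paper itself contains no proof of this theorem: it is quoted from \cite{f0}, and the only guidance the surrounding text gives is the structural remark that the snake $S_n$ makes every prime quotient of $L_1$ projective to every prime quotient of $L_2$, but in no fewer than $n$ steps. Measured against that indication and the cited source, your two-pronged architecture --- a projectivity-transported ``characteristic'' for the negative half, and a splitting of the snake for the positive half --- is the correct skeleton of the actual argument, so the overall plan is sound rather than wrong.

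Both halves, however, currently rest on steps that are not yet available in the form you invoke them. For the negative half, ``projective prime quotients carry isomorphic abelian groups'' is not an off-the-shelf fact: the quotients $\varphi(a)/\varphi(0)$ are prime in the sublattice $\varphi(M_n(p_1,p_2))$ but not in $\mathrm{Con}(A)$, and they need not be abelian a priori. What modular commutator theory actually supplies is that a diamond $M_3$ of congruences $\alpha,\beta,\gamma$ forces $[\alpha+\beta,\alpha+\beta]\le \alpha\cdot\beta$, so the relevant quotients become abelian and acquire module structure; the invariant that projectivities transport is then the characteristic of the coordinate ring of the frame sitting inside the image of $L(V_i)$, not an isomorphism type of a group attached to an arbitrary prime quotient. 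Without this reduction your phrase ``the characteristic of the attached module is a projectivity invariant'' has no referent, and this is precisely the technical heart of \cite{f0}. For the positive half, the bound $k(m)$ is the crux and you offer only an intuition: you need that $S_n$ has breadth $2$ (being planar of width $4$), so that an $m$-generated sublattice meets only boundedly many of the $n$ gluing levels; you must then realize the ``splitting'' as a pair of congruences $\theta_1,\theta_2$ of $M_n(p_1,p_2)$ whose intersection is trivial on the given sublattice $K$, and check that each quotient $M_n(p_1,p_2)/\theta_i$ --- which is $L_i$ glued with a $2$-distributive piece of snake --- embeds into $L(V_i)$ over the prime field of characteristic $p_i$. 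Only then does $K\hookrightarrow L(V_1)\times L(V_2)$ follow. None of these verifications is an obstruction, but each is a substantive lemma you would have to prove, not a detail.
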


The lattices in (4) arise from  a more general (and elaborate)
 gluing construction. Also, the proof is based
on the higher Arguesian identites valid in   lattices
of permuting equivalences.

\begin{thm} (Haiman)
For each field $F$ there is a sequence $A_n$ of modular lattices
not having any representation as lattice of permuting equivalences
and such that for each $m$ there is $k$ such that  for each $n\geq k$ 
and each  $m$-generated sublattice of $A_n$ there is an embedding into
the lattice $L(V_F)$  of subspaces of some finite dimensional $F$-vector space
$V_F$.
\end{thm}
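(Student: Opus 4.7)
The plan is to parallel the Freese--Herrmann--Huhn construction from the previous theorem, replacing the characteristic-based obstruction by a higher Arguesian obstruction. The key input is that lattices of permuting equivalences satisfy Haiman's higher Arguesian identities of every length, so whenever a fixed such identity fails in $A_n$, no representation as permuting equivalences can exist.

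First I would construct $A_n$ by an elaborate gluing of the kind sketched for $M_n(p_1,p_2)$. Take two copies of $L(V_F)$ of height at least three, and connect a selected prime quotient of one copy to a selected prime quotient of the other through a long chain of simple modular ``snakes'' $S_n$ of length $n$, mimicking the snake construction recalled before the previous theorem. The crucial design choice is to twist the interior rungs so that a specific higher Arguesian term of order roughly $n$, built from a handful of generators taken from the two outer $L(V_F)$'s together with a few elements chosen from the snake, evaluates to unequal values on the two sides of the identity. Modularity through the gluings is standard; producing the intended inequality is the point.

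Second, non-representability follows immediately: the violated identity holds in every lattice of permuting equivalences, so $A_n$ cannot embed into any such lattice. Third, for the $m$-generated statement I would use a locality argument. Any $m$-generated sublattice $B\subseteq A_n$ meets only boundedly many rungs of the snake, so by choosing $k=k(m)$ large enough one can find a rung not touched by $B$. Cutting $A_n$ there confines $B$ to a lattice built from $L(V_F)$ together with a short snake, a finite modular lattice one checks is embeddable into $L(V'_F)$ for some finite dimensional $V'_F$ over $F$ by the usual gluing--of--projective--planes lemma, thus producing the required embedding.

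The main obstacle is the first step: engineering the snake so that (a) modularity is preserved at every gluing, (b) the chosen higher Arguesian term genuinely fails in $A_n$, and (c) the failure is \emph{spread} over the full length $n$, in the sense that no $m$-generated sublattice of $A_n$ can simultaneously reach the configuration of elements that witnesses it. Point (c) is exactly what makes the negative and positive halves of the theorem compatible, and matching the combinatorics of the snake to the depth of the Arguesian identity is where Haiman's technical work lies; one expects that the ``higher'' Arguesian identity witnessing failure must itself grow in complexity with $n$, so that the rank of the identity and the length of the snake increase together and stay invisible to any bounded generating set.
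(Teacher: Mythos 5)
The paper does not prove this theorem: it is quoted from Haiman \cite{hai1,hai2}, and the only indication of method given is the sentence that the lattices ``arise from a more general (and elaborate) gluing construction'' and that ``the proof is based on the higher Arguesian identities valid in lattices of permuting equivalences.'' Your outline is consistent with that one-line description -- you correctly identify the two essential ingredients (a gluing construction; the higher Arguesian identities as the obstruction to a permuting-equivalence representation) and the right shape for the positive half (an $m$-generated sublattice, for $n$ large, avoids part of the configuration and lands in a piece coordinatizable over $F$). But what you have written is a plan, not a proof: the entire technical content of the theorem is the construction of $A_n$ and the verification that a specific higher Arguesian identity actually fails in it, and you explicitly defer exactly this step as ``the main obstacle.'' Nothing in the proposal certifies that such a construction exists, so the gap is the theorem itself.

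There is also a concrete structural problem with the shape you propose. Modelling $A_n$ on $M_n(p_1,p_2)$ -- two copies of $L(V_F)$ joined by a linear chain of snakes -- cannot work, because the mechanism of the Freese--Herrmann--Huhn example is the clash of characteristics at the two ends, which disappears when both ends are over the same field $F$. Worse, your own argument for the positive half (cut the chain at an untouched rung and embed the remaining piece into some $L(V'_F)$ by the gluing-of-coordinatizable-lattices lemma) applies verbatim to the \emph{whole} lattice if that lattice is itself a linear chain of $F$-coordinatizable pieces; you would then have proved that $A_n$ embeds into a subspace lattice, contradicting the non-representability you want. The construction must therefore have a different topology, one that is genuinely destroyed by cutting: Haiman glues copies of the subspace lattice of a projective plane over $F$ into a \emph{cycle} with a twist that obstructs any globally compatible coordinatization, and it is the cycle -- not a chain -- that makes the failure of a higher Arguesian identity compatible with every $m$-generated sublattice missing a gluing site and hence embedding into $L(V_F)$. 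Your point (c) gestures at the right phenomenon, but the chain ansatz is self-defeating and needs to be replaced before the plan is even coherent.
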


\section{Non-finite axiomatizability}

The above (counter)examples yield families of  non-reprsentables
such that an ultraproduct is representable (of given type). 
Which implies that the class of representables
cannot be finitely axiomatized. 
This kind of appraoch to non-finite axiomatizablity 
 is due to Kirby Baker \cite{kirby} e.g. in the
case of the variety  generated by  modular lattices of width
at most $n$, where $n \geq 5$.

\begin{fact}\lab{8}
Every algebraic structure belongs to the universal class generated by its
finitely generated substructures.
Consequently, one has. 
\begin{itemize}
\item[(i)] If, for $p_1 \#p_2$,  $M$ is a non-principal ultraproduct
of the $M_n(p_1,p_2)$ then every finitely generated 
sublattice of $M$ is in the universal class
generated by $3$-distibutive sublattices of the lattices $L(V_i)\times L(V_2)$ 
where the $V_i$ range over finite dimensional vector spaces over
ptime fields of   characteristics $p_1, p_2$. See \cite{f0}
\item[(ii)]
If $A=\prod_{\mc F} A_n$ 
is a non-pricipal ultraproduct of the $A_n$ over $F$ then every finitely
generated sublattice of
$A$ is in the universal class generated 
by the lattices $L(V_F)$, $V_F$ ranging over finite dimensional $F$-verctor spaces. See \cite{hai1,hai2}
\end{itemize}
\end{fact}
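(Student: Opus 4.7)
The plan is to establish the headline statement by a standard Tarski-style preservation argument, and then to read (i) and (ii) off as applications of it together with the theorems of Freese--Herrmann--Huhn and Haiman, respectively. Recall that a universal class is the class of models of some set of sentences of the form $\forall\bar x\,\varphi(\bar x)$ with $\varphi$ quantifier-free; equivalently, it is the closure of its generating class under isomorphic images, substructures and ultraproducts.

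For the headline statement, the key observation is that the satisfaction of a quantifier-free formula $\varphi(\bar a)$ in a structure $A$ is determined entirely by the substructure $\langle\bar a\rangle_A$ generated by $\bar a$, since $\varphi$ only probes values of terms applied to $\bar a$. Hence if every finitely generated substructure of $A$ satisfies a universal sentence $\sigma=\forall\bar x\,\varphi(\bar x)$, then so does $A$: any potential counterexample tuple $\bar a$ would already refute $\sigma$ in $\langle\bar a\rangle_A$. Therefore $A$ satisfies every universal sentence valid on its class of finitely generated substructures, placing $A$ in the universal class they generate.

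For (i), let $N$ be an $m$-generated sublattice of $M=\prod_{\mc{F}}M_n(p_1,p_2)$, with generators $g_1,\dots,g_m$ represented by sequences $(g_j^{(n)})_n$, and let $N_n$ denote the sublattice of $M_n(p_1,p_2)$ generated by $g_1^{(n)},\dots,g_m^{(n)}$. Using {\L}o\'s's theorem applied to lattice equations, the assignment $t(g_1,\dots,g_m)\mapsto[t(g_1^{(n)},\dots,g_m^{(n)})]_{\mc{F}}$ is a well-defined injective lattice homomorphism $N\hookrightarrow\prod_{\mc{F}}N_n$. By the Freese--Herrmann--Huhn theorem there is a threshold $k=k(m)$ such that, for every $n>k$, $N_n$ is a $3$-distributive sublattice of some $L(V_1)\times L(V_2)$ with $V_i$ finite-dimensional over the prime field of characteristic $p_i$; since $\mc{F}$ is non-principal, the set of such indices belongs to $\mc{F}$. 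Consequently $\prod_{\mc{F}}N_n$ embeds into an ultraproduct of such products of subspace lattices, so $N$ itself lies in the universal class they generate. Step (ii) is formally identical, with Haiman's theorem supplying the bound and single subspace lattices $L(V_F)$ in place of products.

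The only delicate point in either application is the reduction to $\prod_{\mc{F}}N_n$; once that embedding of $N$ is recorded, everything else is bookkeeping with the standard closure properties of universal classes under substructures and ultraproducts. No deeper ingredient is required, which is why the conclusion is advertised as a ``consequence'' of the general fact rather than as an independent theorem.
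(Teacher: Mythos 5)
The paper states this Fact without any proof (it only points to \cite{f0} and \cite{hai1,hai2}), so there is nothing to match your argument against line by line; judged on its own, your proof is correct and is surely the intended one. Your treatment of the headline claim --- satisfaction of a quantifier-free formula at a tuple $\bar a$ is decided inside $\langle\bar a\rangle_A$, hence $A$ inherits every universal sentence valid in all of its finitely generated substructures --- is exactly the standard Tarski-style argument. You are also right to notice that (i) and (ii) are \emph{not} literal consequences of the headline statement alone: the extra ingredient is the reduction of an $m$-generated sublattice $N$ of the ultraproduct to the ultraproduct $\prod_{\mc F}N_n$ of $m$-generated sublattices of the factors, after which one quotes the Freese--Herrmann--Huhn resp.\ Haiman threshold $k(m)$, the fact that a non-principal ultrafilter on $\mb N$ contains the cofinite set $\{n:n>k\}$, and closure of universal classes under substructures and ultraproducts. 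Two small points worth making explicit: the $3$-distributivity of $N_n$ required by the wording of (i) comes from $3$-distributivity of $M_n(p_1,p_2)$ being an identity, hence inherited by sublattices; and an alternative, slightly slicker route applies the headline fact to each $M_n$ rather than to $M$ --- a universal sentence with $m$ variables true in all generators holds in every $M_n$ with $n>k(m)$ because every $m$-generated sublattice of such $M_n$ is isomorphic to a generator, and then {\L}o\'s's theorem gives it in $M$ directly. Either way the conclusion stands.
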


\begin{cor}
Let  $\mc{L}$ a class of modular lattices closed
under formation of sublattices and satisfying one of the following:
\begin{itemize}
\item[(i)] 
Each $L\in \mc{L}$ 
embeds into the congruence lattice of same algebra in a congruence
modular variety and $\mc{L}$ contains, for given
characteristics $p_1\# p_2$ 
all $3$-distributive sublattices of $L(V_1)\times L(V_2)$ 
with $V_i$ ranging over finite dimensional vector
space over the prime field of characteristic $p_i$. 
\item[(ii)]
Each $L\in \mc{L}$ 
has a permuting equivalence representation
and $\mc{L}$ 
 contains for some field $F$, all lattices of subspaces of
finite dimensional $F$-vector spaces. 
\end{itemize}
 Then $\mc{L}$  
is not finitely axiomatizable. See \cite{f0,f1,f2,hai1,hai2}.
\end{cor}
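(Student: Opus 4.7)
The plan is to argue by contradiction: assuming $\mc{L}$ is axiomatized by a finite set $\Sigma$ of first-order sentences, I will force a non-principal ultraproduct of non-representable lattices to lie both in $\mc{L}$ and outside $\mc{L}$. First I would replace $\Sigma$ by its conjunction $\phi$ and then, using that $\mc{L}$ is closed under sublattices by hypothesis, invoke the \L o\'s--Tarski preservation theorem to assume $\phi$ is universal, say $\phi \equiv \forall x_1 \cdots \forall x_m\, \psi(x_1,\ldots,x_m)$ with $\psi$ quantifier-free.

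In case (i), let $M = \prod_{\mc{F}} M_n(p_1, p_2)$ be a non-principal ultraproduct. Fact~\ref{8}(i) places every finitely generated sublattice of $M$ in the universal class generated by the 3-distributive sublattices of $L(V_1) \times L(V_2)$; by hypothesis each such sublattice is in $\mc{L}$, hence satisfies the universal $\phi$, and universal sentences pass from a class to the universal class it generates. Thus every finitely generated sublattice of $M$ satisfies $\phi$, and since $\phi$ has only $m$ variables, any $m$-tuple in $M$ already lies in a sublattice satisfying $\phi$, yielding $M \models \phi$, i.e., $M \in \mc{L}$. On the other hand, the Freese--Herrmann--Huhn theorem gives $M_n(p_1, p_2) \notin \mc{L}$ for every $n$, so \L o\'s's theorem forces $M \notin \mc{L}$, the desired contradiction. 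Case (ii) is formally identical, with $A_n$, $L(V_F)$, Fact~\ref{8}(ii) and Haiman's theorem replacing their case-(i) counterparts.

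The principal obstacle is the reduction to a universal $\phi$: it is this step that allows $\phi$ to be transferred from the representable sublattices to the ultraproduct via the universal class of Fact~\ref{8}. Once \L o\'s--Tarski has cut the quantifier complexity of $\phi$ to $\forall^{*}$ in some $m$ variables, the FHH and Haiman theorems supply exactly what is needed: every $m$-generated sublattice of the counterexamples embeds into the representable world, so no universal axiom in $m$ variables can separate $M_n(p_1,p_2)$ (or $A_n$) from $\mc{L}$. A purely ``Baker-style'' variant bypasses the ultraproduct altogether: for $n$ exceeding the bound $k(m)$ of the cited theorem, each $m$-generated sublattice of the $n$-th counterexample lies in $\mc{L}$ and hence satisfies $\phi$, so the counterexample itself satisfies $\phi$, again contradicting non-representability.
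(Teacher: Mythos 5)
Your proof is correct and takes essentially the same route as the paper: use closure under sublattices to reduce a finite axiomatization to a single universal sentence, apply Fact~\ref{8} together with hypothesis (i) resp.\ (ii) to show the non-principal ultraproduct of the counterexamples satisfies that sentence, and use \L o\'s's theorem (equivalently, that the complement of a finitely axiomatized class is axiomatic) to conclude the ultraproduct cannot satisfy it. Your concluding ``Baker-style'' variant, deriving $M_n(p_1,p_2)\models\phi$ (resp.\ $A_n\models\phi$) directly for $n$ beyond the bound $k(m)$ of the cited theorems, is a correct minor reformulation that dispenses with the ultraproduct but does not change the substance of the argument.
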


\begin{proof}
 Assume $\mc{L}$ finitely axiomatized
whence, in particular, a universal class.
Then its complement 
$\mc{C }$ is an axiomatic class. 
Also, $L_n\in \mc{C}$ where  $L_n=M_n(p_1,p_2)$  resp..   $L_n=A_n$.
Let $L$ a non-principal ultraproduct of the $L_n$. Then also
$L \in \mc{C}$ since $\mc{C}$ is axiomatic.
But, in view of Fact~\ref{8}, $L \in \mc{L}$, a contradiction. 
\end{proof}

\end{document}